\documentclass[11pt]{article}

\addtolength{\topmargin}{-3\baselineskip}
\addtolength{\textheight}{6\baselineskip}
\addtolength{\textwidth}{3cm}
\addtolength{\oddsidemargin}{-15mm}
\addtolength{\evensidemargin}{-15mm}

\usepackage[colorlinks=true,citecolor=black,linkcolor=black,urlcolor=blue]{hyperref}
\usepackage[english]{babel}
\usepackage[utf8]{inputenc}
\usepackage{latexsym}
\usepackage{booktabs}
\usepackage{amsfonts}
\usepackage[pdftex]{graphicx}
\usepackage{subfig}
\usepackage{caption}
\usepackage{textcomp}
\usepackage{amsmath,amssymb,amsthm}
\usepackage{tikz,color,graphicx}
\usepackage{pgfplots}
\usepackage{floatflt,epsfig}
\usepackage{bbm}
\usepackage{blkarray}
\usepackage{mathtools}
\usepackage{tikz}
\usetikzlibrary{positioning,chains,fit,shapes,calc}
\usetikzlibrary{arrows,intersections}

\usepackage{color}
\usepackage[colorinlistoftodos]{todonotes}


\theoremstyle{plain}
\newtheorem{theorem}{Theorem}[section]
\theoremstyle{definition}
\newtheorem{definition}[theorem]{Definition}
\theoremstyle{definition}

\theoremstyle{definition}

\theoremstyle{definition}

\theoremstyle{plain}
\newtheorem{corollary}[theorem]{Corollary}
\theoremstyle{plain}
\newtheorem{proposition}[theorem]{Proposition}
\theoremstyle{plain}
\newtheorem{lemma}[theorem]{Lemma}
\theoremstyle{plain}

\theoremstyle{definition}
\newtheorem{remark}[theorem]{Remark}

\numberwithin{equation}{section}
\numberwithin{theorem}{section}

\newcommand{\sss}{\scriptscriptstyle}
\def\cN{\mathcal{N}}
\def\E{\mathbb{E}}

\def\vep{\varepsilon}
\def\ep{\varepsilon}

\def\Nlos{N_{\mathrm{los}}}
\def\P{\mathbb{P}}
\newcommand{\ch}[1]{\textcolor{black}{#1}}
\newcommand{\eqn}[1]{\begin{equation}#1\end{equation}}
\newcommand{\eqan}[1]{\begin{align}#1\end{align}}

\DeclareMathOperator{\expec}{{\mathbb{E}}}

\newcommand {\convp}{\stackrel{\sss {\mathbb P}}{\longrightarrow}}


\begin{document}

\pagestyle{plain}
\pagenumbering{arabic}

\title{The winner takes it all but one}


\author{\renewcommand{\thefootnote}{\arabic{footnote}}
\renewcommand{\thefootnote}{\arabic{footnote}}
Maria Deijfen%
\footnotemark[1]\hspace{0.5cm}
\renewcommand{\thefootnote}{\arabic{footnote}}
Remco van der Hofstad%
\footnotemark[2]\hspace{0.5cm}
\renewcommand{\thefootnote}{\arabic{footnote}}
Matteo Sfragara\,%
\footnotemark[1]
}

\footnotetext[1]{%
Department of Mathematics, Stockholm University, Sweden.
}
\footnotetext[2]{%
Department of Mathematics and Computer Science, Eindhoven University of Technology, The Netherlands.
}

\date{April 2022}
\maketitle

\begin{abstract}

\noindent We study competing first passage percolation on graphs generated by the configuration model with \ch{infinite-mean degrees}. Initially, two uniformly chosen vertices are infected with type 1 and type 2 infection, respectively, and the infection then spreads via nearest neighbors in the graph. The time it takes for the type 1 (resp.\ 2) infection to traverse an edge $e$ is given by a random variable $X_1(e)$ (resp.\ $X_2(e)$) and, if the vertex at the other end of the edge is still uninfected, it then becomes type 1 (resp.\ 2) infected and immune to the other type. Assuming that the degrees follow a power-law distribution with exponent $\tau \in (1,2)$, we show that, with high probability as the number of vertices tends to infinity, one of the infection types occupies all vertices except for the starting point of the other type. Moreover, both infections have a positive probability of winning regardless of the passage times distribution. The result is also shown to hold for the erased configuration model, where self-loops are erased and multiple edges are merged, and when the degrees are conditioned to be smaller than $n^\alpha$ for some $\alpha > 0$.
\end{abstract}


\section{Introduction and main results}

First passage percolation (FPP) was introduced in \cite{HW65} as a model for the flow of fluid through random media and has evolved into one of the fundamental models of random growth. The basic model for FPP on a graph is defined by assigning i.i.d.\ non-negative random weights to the edges, referred to as passage times and interpreted as the times or costs of traversing the edges. This induces a random metric on the vertex set where the distance between two vertices is the minimal cost-sum among all nearest-neighbor paths connecting the two vertices. Of primary interest is the asymptotic behavior of distances, balls and geodesics (time-minimizing paths) in the first-passage metric. The classical example is when the underlying structure is taken to be the $\mathbb{Z}^d$-lattice; see \cite{50years} for a recent survey of results in this setting. The case with exponential passage times has received particular attention and is referred to as the Richardson model.

In \cite{HP98}, the Richardson model (on $\mathbb{Z}^d$) was extended to a two-type version that describes a {\em competition} between two infection types that evolve simultaneously using passage times with (potentially) different intensities. The event that both infection types occupy infinite parts of the lattice is referred to as {\em infinite coexistence,} and it is conjectured that this has positive probability if and only if the infections have the same intensity. The if-direction was proved in full generality independently in \cite{GM05} and \cite{H05}. The only-if-direction remains to be fully resolved, but partial results can be found in \cite{HP00}. We refer to \cite{DH08} for a survey and further references.

The past few years have witnessed an explosion in the amount of empirical data on networks, showing that many networks exhibit similar properties. This has incited the formulation of a large number of network models aiming at capturing and explaining these properties. One property that is observed in many real-world networks is that they display asymptotic power-law degree distributions, that is, the number of vertices with degree $k$ decays asymptotically for large networks as well as large $k$ as $k^{-\tau}$ for some exponent $\tau>1$, which for a variety of empirical networks has been observed to range from just above 1 to a bit above 3; see \cite[Table II]{AB02}. See also \cite{VvdHvdHK19} for more recent estimates of power-law exponents, and \cite{BC19} for criticism on the prevailing suggestion that power laws are omnipresent. The regime $\tau >3$ corresponds to finite variance, $\tau \in (2,3)$ to finite mean but infinite variance, and $\tau \in (1,2)$ to infinite mean. The standard model for generating graphs with a prescribed degree distribution is known as the {\em configuration model} and is constructed by independently assigning half-edges to the vertices according to the desired distribution and then pairing the half-edges randomly. Its structure is well understood in all three power-law regimes mentioned above; see e.g.\ \cite{vdEvdHHZ05,vdHHVM05,vdHHZ05,vdHHZ07}. FPP on the configuration model with $\tau>2$ and exponential passage times has been analyzed in \cite{BvdHH10a} and the results for $\tau>3$ were extended to all continuous passage-time distributions in \cite{BvdHH17}. For $\tau \in (2,3)$, FPP was studied in more detail in \cite{BvdHK17, BvdHK19}, and for $\tau \in (1,2)$ in \cite{BvdHH10b}.

In competing FPP on the configuration model, two infection types compete to invade the vertices in the graph. Each edge is equipped with two independent non-negative random weights from two potentially different distributions, indicating the passage time for type 1 and 2, respectively. When a vertex is type $i$ infected, the passage times on the incident edges are activated. Then, a given neighbor that is uninfected when the type $i$ passage time on the connecting edge has passed becomes type $i$ infected at that time. Infected vertices become immune to the other type and stay infected forever. The growth is typically started from two uniformly chosen vertices. The model was analyzed for exponential passage times and constant degrees (leading to random regular graphs) in \cite{ADMP17}, where it was shown that the strongest type occupies all but a vanishing fraction of the vertices when the intensities are different, while both types occupy positive fractions of the vertices when they are equal (the analysis in \cite{ADMP17} included also more general initial sets leading to modified results). For exponential passage times and degree exponent $\tau>3$, it was shown in \cite{ADJ19} that the behavior is the same as for constant degrees. The case with exponential passage times and $\tau\in(2,3)$ was considered in \cite{DvdH16}, where it was shown that one of the types then occupies all but a finite number of the vertices. Furthermore, both types have a positive probability of winning regardless of the intensities of the infections. The model has also been analyzed for constant passage times in the regime $\tau\in(2,3)$ in \cite{BvdHK15} and \cite{vdHK15}. When the types have different (constant) passage times, the faster type occupies all but a vanishing fraction of the vertices \cite{BvdHK15}, while in the symmetric case coexistence can occur depending on the choice of the two starting vertices \cite{vdHK15}.

In this paper we analyze competing FPP on graphs generated by the configuration model with degree exponent $\tau\in(1,2)$ for a large class of passage-time distributions. Values of $\tau \in (1, 2)$ have been observed in social networks, such as e-mail networks and collaboration networks, in technological networks, such as the link structure of the World Wide Web and networks of dependencies between software packages, and in ecological networks; see \cite[Table II]{AB02} and \cite[Table II]{N03}. We show that, with high probability as the number of vertices grows to infinity, one of the infection types occupies all vertices except for the starting point of the other type, \ch{leading to the most extreme `winner-takes-it-all' phenomenon possible}. Moreover, both types have a positive probability of winning, regardless of the passage times distributions. For $\tau\in(1,2)$, the graph has a degenerate structure where all vertices are connected to a small number of giant-degree vertices with degrees comparable to the total degree in the graph. The competition is essentially won by the type that first makes it from its starting point to one of its (giant-degree) neighbors. After this happens, the infection type quickly invades all other giant-degree vertices, thereby preventing the other type from making any progress at all. The behavior is explosive and the outcome of the competition is determined in finite time.

\subsection{Definition of the model}
\label{sec:model}
The configuration model takes $n$ vertices and a probability distribution with support on non-negative integers as input. Let $D$ be a random variable drawn from the given probability distribution and let $D_1, D_2, \dots, D_n$ be i.i.d.\ copies. These represent the degrees of the vertices and we write $L_n = \sum_{i=1}^n D_i$ for the total degree. To construct the graph, each vertex $i\in \{1, \ldots, n\}=[n]$ is first assigned $D_i$ half-edges. The half-edges are then iteratively paired to form edges. Specifically, at each step we pick two half-edges uniformly at random from the set of half-edges that have not been paired yet, and connect them into an edge. If $L_n$ is odd, so that only one half-edge remains in the last step, we add one extra half-edge at a uniformly chosen vertex. To avoid trivial complications in the formulation of our results, we throughout assume that $D_i\geq 1$, so that there are no isolated vertices in the graph. 

The probability mass function of the degree distribution is denoted by
$$
\mathbb{P}(D = j) = f_j, \qquad j = 1,2,\dots,
$$
and the distribution function is given by
$$
F(x) = \mathbb{P} (D \leq x) = \sum_{j=1}^{\lfloor x \rfloor} f_j,
$$
where $\lfloor x \rfloor$ indicates the largest integer smaller than or equal to $x$. Our main assumption is that
\begin{equation}
\label{svfunction}
1 - F(x) = \ell(x)x^{-(\tau-1)}\quad\mbox{for some }\tau \in (1,2),
\end{equation}
where $\ell$ is a slowly-varying function, that is, the degrees obey a power law with infinite mean. The notation $\ell(n)$ will throughout refer to a slowly varying function that might differ at different occurrences.

To define the competition process, we equip each edge $e$ in the graph with two independent weights  $X_1(e)$ and $X_2(e)$ representing the passage time through the edge for type 1 and 2, respectively. Here $\ch{(X_1(e))}_e$ and $\ch{(X_2(e))}_e$ are assumed to be i.i.d.\ but with potentially different distributions. At time 0, vertex 1 is infected with type 1 and vertex 2 is infected with type 2 while all other vertices are uninfected. Note that, since vertices are exchangeable in the configuration model, this is equivalent to starting from two vertices chosen uniformly at random. The infections then spread in the graph via nearest neighbors: the time it takes for the type 1 (resp.\ 2) infection to traverse an edge $e$ and reach the vertex $v$ at the other end of the edge is given by $X_1(e)$ (resp. $X_2(e)$). If $v$ is still uninfected at that time, \ch{then} it becomes type 1 (resp.\ 2) infected. An infected vertex remains infected with the same type forever and becomes immune to the other type.

For a random variable $Y$, let \ch{supp}$(Y)$ denote the support of the distribution of $Y$. We work with a fairly large class of passage time distributions, assuming only that
\begin{equation}
\label{passtime_assumption}
\begin{array}{l}
X_1(e) \mbox{ and } X_2(e) \mbox{ are continuous random variables}\\
\mbox{with inf~} \mbox{supp}(X_1(e))=\mbox{inf~}\mbox{supp}(X_2(e))=0.
\end{array}
\end{equation}

\subsection{Results and heuristics}

It is well-known that, when $\tau \in (1,2)$ and $D_i\geq 1$, the graph generated by the configuration model contains a giant component that comprises almost all vertices, that is, the asymptotic fraction of vertices in the giant component converges to 1; see \cite{ACL01,MR95}. This means that almost all vertices will eventually be infected in the competition process described above. Let $N_i(n)$ denote the number of vertices ultimately infected by type $i$ ($i=1,2$) and define $\Nlos(n)=\min\{N_1(n),N_2(n)\}$. Also write $Z_i$ for the minimal passage time from the initial type $i$ vertex to one of its neighbors, so that, assuming that $i$ does not have any self-loops, $Z_i \stackrel{d}{=} \min\{X_i(j)\colon j=1, \dots, D\}$, where $(X_i(j))_{j\geq 1}$ are i.i.d.\ random variables from the passage time distribution. Our main result states that one of the types \ch{overtakes} the other by occupying all vertices except for the starting point of the other type. Furthermore, the winning type is the one that makes the first move in the process by infecting one of its neighbors.

\begin{theorem}[The winner takes it all but one]
\label{thm:o}
Consider competing FPP on the configuration model satisfying \eqref{svfunction} and \eqref{passtime_assumption}. Then $\lim_{n\to\infty} \P(\Nlos(n)=1)=1$.
Furthermore,
$$
\lim_{n\to\infty} \P(N_1(n)=1\mid Z_1>Z_2)=\lim_{n\to\infty} \P(N_2(n)\ch{=1}\mid Z_1<Z_2)=1.
$$
\end{theorem}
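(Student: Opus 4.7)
I would organise the argument around the extreme hub structure of $\CMnD$ in the regime $\tau\in(1,2)$: both $L_n$ and the maximum degree are of order $n^{1/(\tau-1)}$, and almost all of the degree mass is concentrated on a small set of hubs (see e.g.\ \cite{vdHHZ07,BvdHH10b}). Fix a small $\alpha>0$ and call a vertex a \emph{hub} if its degree exceeds $n^{\alpha}$. A first block of standard structural bookkeeping, all whp, gives: (i) vertices $1$ and $2$ have no self-loops, are not adjacent, and have pairwise distinct neighbours; (ii) every neighbour of either starting vertex is a hub, since each incident half-edge pairs with a size-biased endpoint, which is a hub with probability $1-o(1)$; (iii) the two random neighbours $v_1^\star, v_2^\star$ first infected by types $1$ and $2$ have degrees of order $n^{1/(\tau-1)}$, since each is a size-biased pick whose law is dominated by the top hubs; and (iv) for every hub $w$, the number of direct edges between $v_1^\star$ and $w$ has expectation of order $d_{v_1^\star}d_w/L_n\to\infty$ and concentrates around this mean.

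Building on (iii)--(iv), I would then prove the key cascade estimate: conditionally on the outcome of the first type-$1$ step, the type-$1$ infection reaches every neighbour of vertex $2$ within an extra time $\eta_n\convp 0$. Fix such a neighbour $w$, itself a hub by (ii). By (iv) there are $N_n\to\infty$ direct edges from $v_1^\star$ to $w$, and the type-$1$ passage time from $v_1^\star$ to $w$ is bounded above by the minimum of $N_n$ i.i.d.\ copies of $X_1(e)$. Since $X_1$ is continuous with $\inf\operatorname{supp}(X_1)=0$, one has $\P(X_1<\varepsilon)>0$ for every $\varepsilon>0$, and therefore
\begin{equation*}
\P\Bigl(\min_{i\leq N_n} X_1^{(i)}>\varepsilon\Bigr)\;=\;\bigl(1-\P(X_1<\varepsilon)\bigr)^{N_n}\;\longrightarrow\;0.
\end{equation*}
A union bound over the $D_2$ (a.s.\ finite) neighbours of vertex $2$ then gives $\eta_n\convp 0$.

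The race is decided at time $\min(Z_1,Z_2)$. By continuity $\P(Z_1=Z_2)=0$, so it suffices to analyse the event $\{Z_1<Z_2\}$ and invoke symmetry. On this event, at time $Z_1$ the type-$1$ infection occupies $v_1^\star$, and by the cascade step every neighbour of vertex $2$ is type-$1$ infected by time $Z_1+\eta_n$. Hence it remains to show $\P(Z_1<Z_2\leq Z_1+\eta_n)\to 0$, and for every fixed $\varepsilon>0$
\begin{equation*}
\P\bigl(Z_1<Z_2\leq Z_1+\eta_n\bigr)\;\leq\;\P\bigl(0<Z_2-Z_1\leq\varepsilon\bigr)\,+\,\P(\eta_n>\varepsilon).
\end{equation*}
The second term vanishes by the cascade estimate, while the first is $n$-independent and tends to $0$ as $\varepsilon\to 0$ by continuity of the joint law of $(Z_1,Z_2)$. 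We deduce $\P(N_2(n)=1\mid Z_1<Z_2)\to 1$ and, symmetrically, $\P(N_1(n)=1\mid Z_1>Z_2)\to 1$; since $\P(Z_1\neq Z_2)\to 1$, the statement $\P(\Nlos(n)=1)\to 1$ follows.

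The main obstacle is the structural input (iii)--(iv) together with the explosion step of the second paragraph: one must verify that the \emph{random} first-hit neighbour $v_1^\star$ has degree comparable to the top hubs, since the diverging multiplicity $d_{v_1^\star}d_w/L_n\to\infty$ leans crucially on $d_{v_1^\star}\asymp n^{1/(\tau-1)}$. This is exactly what the size-biased moment computation supplies in the $\tau\in(1,2)$ regime, and is the step that would fail in the finite-mean regimes. Once (iii)--(iv) are in place, the remaining time-gap argument is essentially a continuity statement for the joint law of $(Z_1,Z_2)$.
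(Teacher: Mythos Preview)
Your approach is essentially the paper's: neighbours of the starting vertices are giants, any two giants are joined by a diverging number of direct edges so the type-$1$ passage time from $v_1^\star$ to every neighbour of vertex~$2$ vanishes, and a continuity/time-gap argument for $(Z_1,Z_2)$ finishes. The paper packages the middle step as a single proposition (passage time between two random neighbours tends to~$0$) using the threshold $\vep_n u_n$ with $\vep_n\downarrow 0$, which lets one lemma cover both your (ii) and (iii); note also that your claim in~(i) that vertices~$1$ and~$2$ have pairwise distinct neighbours is actually false whp in this regime (both are attached to the same finite pool of top giants), but you never use it, so the argument is unaffected.
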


Note that it follows from \eqref{passtime_assumption} that the event $\{Z_1>Z_2\}$ has a non-trivial probability, implying that both types have a positive probability of winning the whole graph except for the other starting location.

To heuristically explain the result, note that, for i.i.d.\ power-law random variables with exponent $\tau \in (1,2)$, the sum $L_n$ is of the order $n^{1/(\tau-1)\ch{+o(1)}}$ \ch{when \eqref{svfunction} holds}, which is also the scaling of the maximum degree. In terms of our configuration graph, this means that the bulk of the contribution to the total degree comes from a {\em finite} number of vertices with degrees of the same order as the total degree. \ch{We refer to these} as giant-degree vertices or simply {\em giants}. A basic fact for the configuration model is that the number of connections between two sets of half-edges of sizes $a$ and $b$ is of the order $ab/L_n$. This implies that, since the giants have degree of order $n^{1/(\tau-1)\ch{+o(1)}}$, they are all linked to each other, thus forming a tightly connected complete graph, with the number of multiple edges between two giants being of the order $n^{1/(\tau-1)\ch{+o(1)}}$. Furthermore, with high probability as $n \to \infty$, all other vertices are connected {\em only} to giants. We refer to \cite{vdEvdHHZ05} for a more detailed description of the structure of the graph.

Theorem~\ref{thm:o} is now explained in that all neighbors of the initially infected vertices are tightly connected giants, implying that the type that makes the first move by occupying one of these giants will quickly invade all other giants as well, thereby preventing the other type from making any progress at all. Indeed, the passage time between two giants is the minimum of $n^{1/(\tau-1)\ch{+o(1)}}$ (the number of multiple edges) i.i.d.\ edge passage times, which converges to 0 in probability under the assumption \eqref{passtime_assumption}. The spread between giants is hence \ch{extremely} fast, since there are many edges between giants.

Next, we investigate a more general scenario in which the competition starts from multiple vertices chosen uniformly at random. \ch{In our main result,} for a process started from $k_i$ type $i$ vertices, \ch{we denote} the minimum passage time from the initial type $i$ vertices to the set of neighbors of these vertices by $Z_{i,k_i} \stackrel{d}{=} \min\{X_i(j)\colon j=1, \dots, \sum_{l=1}^{k_i} D_l\}$.

\begin{corollary}[Multiple starting points]
\label{thm:multiple}
Consider competing FPP on the configuration model satisfying \eqref{svfunction} and \eqref{passtime_assumption}, and starting with $k_i$ type $i$ vertices chosen uniformly at random, where $k_i\geq 1$ is fixed ($i=1,2$). The number $\Nlos(n)$ converges in distribution to a random variable $W$ with $\P(W=k_1)=1-\P(W=k_2)=\P(Z_{1,k_1}>Z_{2,k_2})$. More specifically,
$$
\lim_{n\to\infty}\P(N_1(n)=k_1\mid Z_{1,k_1}>Z_{2,k_2})= \lim_{n\to\infty}\P(N_2(n)=k_2 \mid Z_{1,k_1}<Z_{2,k_2})=1.
$$
If $k_1$ is fixed and $k_2=k_2(n)\to\infty$ as $n\to\infty$, the conclusions remain valid with $W\equiv k_1$ and $\lim_{n\to\infty}\P(N_1(n)=k_1)=1$. Similarly, if $k_2$ is fixed and $k_1=k_1(n)\to\infty$, then $W\equiv k_2$ and $\lim_{n\to\infty}\P(N_2(n)=k_2)=1$.
\end{corollary}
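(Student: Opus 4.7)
\smallskip
\noindent The plan is to recycle the mechanism established in the proof of Theorem~\ref{thm:o} and reduce the multi-source problem to a comparison of two explicit first-arrival times at the giant cluster. I would start by recalling the structural input for Theorem~\ref{thm:o}: there is a set $\mathcal{G}$ of giant-degree vertices, tight in size, with degrees $n^{1/(\tau-1)+o(1)}$; any two giants are joined by $n^{1/(\tau-1)+o(1)}$ parallel edges; and with high probability every non-giant vertex is adjacent only to vertices of $\mathcal{G}$. Since $k_1,k_2$ are fixed (or, in the second regime, $k_1+k_2=o(n)$), a union bound shows that, with high probability, the $k_1+k_2$ starting vertices are pairwise distinct, none of them lies in $\mathcal{G}$, and each has all of its half-edges paired to half-edges of giants.

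On this event, the first time a type $i$ infection reaches any vertex outside its starting set is exactly $Z_{i,k_i}$, and that vertex must lie in $\mathcal{G}$. By the continuity in \eqref{passtime_assumption}, $\P(Z_{1,k_1}=Z_{2,k_2})=0$. On $\{Z_{1,k_1}<Z_{2,k_2}\}$, type 1 occupies a giant strictly before type 2 leaves its starting set; the invasion mechanism in the proof of Theorem~\ref{thm:o} then carries over verbatim: type 1 floods all of $\mathcal{G}$ in $o_\P(1)$ time, which blocks type 2 permanently at its $k_2$ starting vertices (since every type 2 neighbor lies in $\mathcal{G}$ and is now immune), while type 1 continues to capture every other non-giant. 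This gives $\P(N_2(n)=k_2\mid Z_{1,k_1}<Z_{2,k_2})\to 1$, and with the symmetric statement yields the distributional limit for $\Nlos(n)$.

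For the regime $k_1$ fixed and $k_2=k_2(n)\to\infty$, the same reduction shows it suffices to prove $\P(Z_{2,k_2(n)}<Z_{1,k_1})\to 1$. Since $D_l\geq 1$ a.s., the number of half-edges at the type 2 starting set is at least $k_2(n)\to\infty$; combined with $\inf\mathrm{supp}(X_2)=0$ from \eqref{passtime_assumption}, this forces $Z_{2,k_2(n)}\convp 0$, whereas $Z_{1,k_1}$ has a fixed continuous law with $\P(Z_{1,k_1}>0)=1$, so the desired convergence follows. The main technical subtlety I expect is ensuring the structural good event (all starting vertices outside $\mathcal{G}$, each with every neighbor in $\mathcal{G}$) still holds when $k_2(n)$ is allowed to grow: if $k_2(n)=o(n)$ a union bound still works, while if $k_2(n)$ grows faster the bad scenario where some type 2 starting vertex lies in $\mathcal{G}$ only \emph{helps} type 2, since that vertex then sits in the dense giant cluster from the outset and invades in $o_\P(1)$ time. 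The symmetric case $k_1=k_1(n)\to\infty$ is identical.
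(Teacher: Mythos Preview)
Your proposal is correct and follows essentially the same route as the paper: identify the first giant reached by the faster type, invoke Proposition~\ref{prop:quickly} (via Remark~\ref{rem:quickly}) to show that from this giant the neighbors of the slower type's starting set are all reached before the slower type can move, and for the growing regime observe that the conditioning event has probability tending to~$1$.

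One small imprecision is worth tightening. You write that ``type~1 floods all of $\mathcal{G}$ in $o_{\P}(1)$ time'' and that this ``carries over verbatim'' from the proof of Theorem~\ref{thm:o}. That proof does not establish this (and $|\mathcal{H}_n|\to\infty$ since $\vep_n\to 0$, so a naive union bound over all giants is not immediate). What it shows, and what suffices, is that type~1 reaches every vertex in $\cN_2$, the neighbor set of the \emph{losing} starting set; since the losing side has a fixed number of sources, $|\cN_2|$ is stochastically bounded and a union bound over finitely many applications of Proposition~\ref{prop:quickly} works. The paper phrases this via the inclusion, conditionally on $\{Z_{1,k_1}<Z_{2,k_2}\}$,
\[
\{N_2(n)>k_2\}\subset\Big\{\max_{h\in\cN_2}\widetilde{T}_1(h_1',h)\geq \ep\Big\}\cup\{Z_{2,k_2}-Z_{1,k_1}\leq\ep\},
\]
with $h_1'$ the endpoint of the minimum-passage-time edge from the type~1 starting set. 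This also explains why no additional structural control on the growing side is needed in the second regime: the argument only ever requires the neighbor set of the \emph{fixed} side to be bounded.
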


The result is again explained by the fact that the type that first reaches a neighbor of its initial set will soon thereafter occupy all giants in the graph, thereby preventing the other type from growing beyond its initial set. If both $k_1$ and $k_2$ are fixed, then $\P(Z_{1,k_1}>Z_{2,k_2})\in(0,1)$ and both types have a positive probability of winning. If $k_1$ is fixed while $k_2$ grows with $n$, then $\P(Z_{1,k_1}>Z_{2,k_2})\to 0$, implying that the type 2 infection \ch{wins} with high probability.

\paragraph{\ch{The conditioned model.}}
The maximum degree in our configuration graph is of the order $n^{1/(\tau-1)\ch{+o(1)}}$ with $\tau\in(1,2)$. In some situations this type of very large degrees are artificial and we may want prevent this while keeping the same form of the degree distribution. We therefore extend our results to the case when the degrees are conditioned to be smaller than $n^{\alpha}$ for some $\alpha > 0$. Specifically we let the degrees be i.i.d\ copies of a random variable $D(n)$, with probability mass function
\begin{equation}
\label{cond_degrees}
\mathbb{P}(D(n) = j) =\frac{f_j}{\mathbb{P}(D \leq n^{\alpha})}\quad\mbox{for }j=1,2,\ldots\mbox{ and }\alpha>0.
\end{equation}

\begin{theorem}[Conditioned model]
\label{thm:c}
Under the assumptions \eqref{svfunction} and \eqref{passtime_assumption}, the conclusions of Theorem~\ref{thm:o} and Corollary~\ref{thm:multiple} are valid also for competing FPP on the configuration model with degree distribution \eqref{cond_degrees} for any $\alpha\neq 1/(\tau +k)$, $k\in\mathbb{N}$.
\end{theorem}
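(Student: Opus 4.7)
The plan is to reduce Theorem~\ref{thm:c} to the argument underlying Theorem~\ref{thm:o} and Corollary~\ref{thm:multiple} by establishing that, for $\alpha\neq 1/(\tau+k)$, the conditioned configuration model still contains a small core of high-degree vertices with the property that an infection reaching any one of them sweeps through the whole backbone in time $o_\mathbb{P}(1)$. Once this is in place, the winning type is again the one determined by the order of $Z_1$ and $Z_2$ (or more generally $Z_{1,k_1}$ and $Z_{2,k_2}$), and the original proof carries through.

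The first step is to separate two regimes. If $\alpha>1/(\tau-1)$, the conditioning is almost vacuous because $\mathbb{P}(D>n^\alpha)=o(1/n)$, so the conditioned and unconditioned degree sequences can be coupled to agree with high probability, and Theorem~\ref{thm:o} applies directly. If $\alpha\leq 1/(\tau-1)$, the truncation is binding and one has $L_n=n^{1+\alpha(2-\tau)+o(1)}$, maximum degree of order $n^{\alpha+o(1)}$, and roughly $n^{1-\alpha(\tau-1)+o(1)}$ vertices with degree in $[\delta n^\alpha,n^\alpha]$ for any fixed $\delta\in(0,1)$. These will play the role of the giants.

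The second and most delicate step is the structural one, which depends on the interval in which $\alpha$ lies. A standard first-moment computation shows that the expected number of edges between two giants scales as $n^{2\alpha}/L_n=n^{\alpha\tau-1+o(1)}$, diverging if and only if $\alpha>1/\tau$. More generally, the expected number of length-$(k+1)$ paths between two giants through intermediate hubs of slightly smaller degree scales as a power of $n$ that diverges exactly when $\alpha>1/(\tau+k)$. Consequently, for $\alpha\in(1/(\tau+k+1),1/(\tau+k))$ the giants are joined by many parallel length-$(k+2)$ paths but by essentially no shorter ones; together with a second-moment argument these paths exist with high probability, giving a hub backbone of bounded diameter. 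Because \eqref{passtime_assumption} forces $\mathbb{P}(X_i(e)<\varepsilon)>0$ for every $\varepsilon>0$, the minimum passage time across each of these layers tends to zero in probability, and a bounded sum of such minima still tends to zero; hence traversing the backbone is instantaneous.

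With the backbone in hand, I would replicate the final step of the proof of Theorem~\ref{thm:o}: show that the two initial vertices are non-giant with high probability, that with high probability all their neighbors---and indeed almost all vertices---lie on or one step away from the backbone, and that the first type to complete an edge from its initial set occupies the entire backbone before the other type can move. The main obstacle is the structural step for small $\alpha$, where one must uniformly control both first and second moments of the number of short paths between giants. The critical values $\alpha=1/(\tau+k)$ are precisely where the expected number of length-$(k+1)$ paths is of constant order, so the law of large numbers fails and the backbone loses its rigidity; excluding these values reduces the problem to an application of standard second-moment concentration inequalities for the configuration model.
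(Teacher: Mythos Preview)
Your overall strategy---reduce to showing $T_i(h_1,h_2)\to 0$ in probability for giant neighbors $h_1,h_2$ and then rerun the proof of Theorem~\ref{thm:o}---is exactly what the paper does, and your regime split at $\alpha=1/\tau$ with the $n^{\alpha\tau-1}$ direct-edge count also matches. The substantive difference is in the case $\alpha<1/\tau$. The paper does not run a second-moment argument on path counts; instead it imports from \cite{vdEvdHHZ05} that for $\alpha\in(1/(\tau+k),1/(\tau+k-1))$ any two giants lie at graph distance at most $k+1$, and then uses the $n^{1-\alpha(\tau-1)+o(1)}\to\infty$ available giants as intermediate points: for $m(n)\to\infty$ distinct giants $x_2,\dots,x_m$ it concatenates a path $h_1\to x_j$ with a path $x_j\to h_2$, each of length $\le k+1$, and shows via a union bound (every edge probability is at most $\ell(n)n^{\alpha\tau-1}\to 0$) that all $m(n)$ resulting paths of length $\le 2(k+1)$ are pairwise disjoint with high probability. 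Disjointness yields independence, so $T_i(h_1,h_2)$ is dominated by the minimum of $m(n)$ i.i.d.\ copies of a $2(k+1)$-fold convolution of the edge-weight law, and this minimum vanishes under \eqref{passtime_assumption}.

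Your version has a real gap at this step. Concentration of the \emph{number} of length-$(k+2)$ paths via a second moment does not by itself produce \emph{disjoint} paths, and without disjointness the passage times along different paths are dependent, so the ``minimum over many'' argument is not justified. The sentence ``the minimum passage time across each of these layers tends to zero \ldots\ and a bounded sum of such minima still tends to zero'' inverts the order of operations: FPP requires the minimum over paths of the sum along each path, not a sum over steps of layerwise minima, which need not correspond to any single path and can grossly underestimate $T_i(h_1,h_2)$. Finally, weakening ``all neighbors of vertex $2$ are giants'' to ``on or one step away from the backbone'' would leave a non-vanishing last edge and break the conclusion $\max_{h\in\cN_2}\widetilde{T}_1(h_1,h)\to 0$; the paper's Lemma~\ref{only_to_giants}, proved for the conditioned model via $K_n^\alpha/M_n\convp 0$, gives the stronger statement you actually need, and you should invoke it directly.
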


When $\alpha>1/(\tau-1)$ the conditioning has no effect on the graph, so the interesting regime is $\alpha<1/(\tau-1)$. The maximum degree in the graph is then $n^\alpha$ and the total degree $L_n=\sum_{i\in[n]} D_i(n)$ is of the order $n^{1+\alpha(2-\tau)}$. This means that all vertices are still connected only to the vertices of maximal degree $n^\alpha$, which now play the role of the giants. In contrast to the unconditioned case, the number of giants grows to infinity with $n$, indicating that the time from when one giant is infected until all giants are infected may not vanish. We show however that the time until the infection reaches all (giant) neighbors of the type that failed to make the first move does vanish. For $\alpha>1/\tau$, this follows from the fact that the giants still form a complete graph with a large number of multiple edges between them. For $\alpha<1/\tau$, the giants no longer form a complete graph, but in \cite{vdEvdHHZ05} it is shown that, for $\alpha\in(1/(\tau+k),1/(\tau+k-1))$, the graph distance between two giants is at most $k+1$. This observation can be used to show that any two giants are with high probability connected by a large number of {\em disjoint paths} of bounded length. Assuming \eqref{passtime_assumption}, this gives the desired conclusion, since the passage times of disjoint paths are independent and the sum of at most $k+1$ i.i.d.\ passage times still has 0 as the infimum of its support.

\paragraph{\ch{The erased model.}} The configuration model allows for self-loops and multiple edges between vertices. Indeed, for $\tau\in(1,2)$, these structures \ch{are} abundant and the occurrence of multiple edges is one of the explanations to the behavior of the competition process. In some situations, however, self-loops and multiple edges are not desirable. One option is then to first generate the graph and then delete all self-loops and merge all multiple edges. This is know as the {\em erased} configuration model.

The topology of a graph generated by the erased configuration model has been studied in \cite{BvdHH10b}. As a result of the erasure, there are no longer multiple edges between vertices, but the set of neighbors of a given vertex remains the same. In \cite{vdEvdHHZ05} it is shown that the number of joint neighbors of two given giants, with degree of the order $n^{1/(\tau-1)}$ before erasure, is of the order $n$, so that two giants are hence connected by a large number of disjoint two-step paths. Hence \eqref{passtime_assumption} implies that the passage time between two giants is vanishing also in the erased model, giving rise to the same behavior of the competition process:

\begin{theorem}[Erased model]
\label{thm:e}
The conclusions of Theorem~\ref{thm:o} and Corollary~\ref{thm:multiple} are valid also for competing FPP on the erased configuration model satisfying \eqref{svfunction} and \eqref{passtime_assumption}.
\end{theorem}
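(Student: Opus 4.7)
The plan is to retrace the architecture of the proof of Theorem~\ref{thm:o} and isolate the one place where the multiple-edge structure between giants is used, namely in showing that the passage time between any two giants vanishes in probability. Everything else in the argument is topological, and since erasure only identifies parallel edges and removes self-loops, it does not alter the neighbor set of any vertex. In particular, the statement that, with high probability, every non-giant is adjacent only to giants—which is the other structural input from \cite{vdEvdHHZ05} that underlies Theorem~\ref{thm:o}—transfers to the erased model verbatim, because a non-giant in the original configuration model is connected to exactly the same set of giants in the erased version. Thus, once we recover a vanishing passage time between giants, the same coupling/domination argument used for Theorem~\ref{thm:o} will give Theorem~\ref{thm:e}, and Corollary~\ref{thm:multiple} will follow in the same way.

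The core technical step is the following substitution. Let $g,g'$ be two giants (vertices of degree $n^{1/(\tau-1)+o(1)}$ in the pre-erasure graph). In the original model, the passage time $T(g,g')$ is dominated by the minimum of i.i.d.\ passage times taken over the $n^{1/(\tau-1)+o(1)}$ parallel edges between $g$ and $g'$, which is $o_{\P}(1)$ by \eqref{passtime_assumption}. In the erased model, the parallel edges are merged, so instead I would use the set $V_{g,g'}$ of common non-giant neighbors. The result cited from \cite{vdEvdHHZ05} that $|V_{g,g'}|$ is of order $n$ with high probability gives, through each $v\in V_{g,g'}$, a distinct two-step path $g \to v \to g'$ in the erased graph; these paths share only their endpoints, so their edge sets are pairwise disjoint and their passage times are independent. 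Writing
\[
T_{\mathrm{erased}}(g,g') \;\leq\; \min_{v\in V_{g,g'}} \bigl( X_i(gv) + X_i(vg') \bigr),
\]
one observes that each summand has $0$ in the infimum of its support by \eqref{passtime_assumption}, so the sum of two independent such variables still has $0$ as the infimum of its support, and hence the minimum of $|V_{g,g'}|\to\infty$ independent copies of that sum converges to $0$ in probability. This produces the required substitute for the vanishing-passage-time-between-giants input.

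With this lemma in place, the proof then proceeds exactly as for Theorem~\ref{thm:o}. Conditional on the initial coins $Z_1,Z_2$ determining which side moves first, the winning type reaches one giant; from there, a union bound over the finitely-many giants whose giant status is relevant (their number is tight as $n\to\infty$ in the unconditioned model) together with the above bound implies that in an additional time tending to $0$ in probability the winning type occupies every giant; and because with high probability every non-giant is connected only to giants, the non-winning type cannot escape its starting vertex before every remaining vertex has been conquered. The multi-seed extension for Corollary~\ref{thm:multiple} requires no new ideas, as the same vanishing bound applies to any fixed or slowly growing collection of giants.

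The main obstacle I anticipate is a careful bookkeeping step rather than a conceptual one: one must ensure that the bound $|V_{g,g'}| \to \infty$ holds uniformly over the (random but tight) collection of giants that actually participate in the competition, that the two-step paths chosen for distinct pairs $(g,g')$ do not create unwanted dependencies that break the minimum-of-independent-sums argument, and that the initial edge from the type-$i$ starting vertex to its first giant neighbor is still drawn from the correct distribution after erasure (self-loops at the starting vertex, which were ignored in the original model via the definition of $Z_i$, need to be handled consistently). None of these are serious, but they are what the write-up will have to verify in order to legitimately import the scheme of the proof of Theorem~\ref{thm:o}.
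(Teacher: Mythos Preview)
Your proposal is correct and matches the paper's approach essentially line for line: the paper also reduces Theorem~\ref{thm:e} to re-proving the vanishing passage time between two giants (Proposition~\ref{prop:quickly}) via the order-$n$ set of common neighbors supplying disjoint two-step paths, citing \cite[Lemma~6.7]{BvdHH10b} for $N(h_1,h_2)/n\Rightarrow Y$, and then runs the identical endgame argument for all three models at once. One small correction: the union bound in the endgame is not over ``the finitely-many giants'' (their number $|\mathcal{H}_n|\sim \vep_n^{-(\tau-1)}\to\infty$ even in the unconditioned model) but over the neighbors $\mathcal{N}_2$ of vertex~2, whose cardinality is controlled by the fixed-distribution variable $D_2$.
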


The rest of the paper is organized so that Section 2 contains the proofs, followed by some suggestions of further work in Section 3.


\section{Proofs}

This section contains the proofs. We first give a more formal definition of giant-degree vertices and then show a key proposition stating that the passage time between two giants is vanishing in all three instances of the model (the original one, the conditioned model and the erased model). With this at hand, all results follow without much further effort. We consider the case with two initial points and then briefly describe how the arguments can be generalized to larger initial sets at the end of the section.

We start by defining the giant-degree vertices as those with a degree of the same order as the maximal degree in the graph. To this end, let $\{u_n\}$ be a sequence satisfying
$$
1-F(u_n)=(1+o(1))/n.
$$
It follows from standard extreme value theory that $u_n$ is the scaling of the total degree $L_n$ as well as the maximal degrees in the graph; see e.g.\ \cite[Lemma 2.1]{vdEvdHHZ05} where this is formulated in the context of the configuration model. Furthermore, it follows from \eqref{svfunction} that \ch{there exists a slowly varying function $n\mapsto l(n)$ such that}
\begin{equation}\label{u_n}
u_n=l(n)n^{1/(\tau-1)}.
\end{equation}

\begin{definition}[Giant-degree vertices]
Fix a sequence $(\vep_n)$ such that $\vep_n \ch{\searrow} 0$ arbitrarily slowly. The set of giant-degree vertices (or giants) is given by $\mathcal{H}_n = \{ h\colon D_h > \vep_n u_n \}$ in the original and the erased model, and $\mathcal{H}_n = \{ h\colon D_h(n) > \vep_n n^\alpha \}$ in the conditioned model.
\end{definition}

Note that $D_h$ refers to the degree before erasure in the erased model and is hence the same as in the original model. Vertices in $\mathcal{H}_n^c=[n]\setminus \mathcal{H}_n$ are referred to as {\em normal} vertices. An important consequence of the definition of giant-degree vertices is that other vertices are connected solely to them:

\begin{lemma}[Neighbors are giants]
\label{only_to_giants}
Assume that \eqref{svfunction} holds. A uniformly chosen vertex is with high probability only connected to giant-degree vertices.
\end{lemma}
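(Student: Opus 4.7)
The idea is that the aggregate number of half-edges attached to non-giant vertices, call it $T_n := \sum_{i\in \mathcal{H}_n^c} D_i$, is a vanishing fraction of $L_n$, so that the $O(1)$ half-edges of a uniformly chosen vertex $v$ almost surely all pair with giants. I would first establish the auxiliary claim that $T_n/L_n \convp 0$, and then derive the lemma from it by a short truncation argument on $D_v$.

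\textbf{The auxiliary claim.} Since non-giants have degree at most $\varepsilon_n u_n$, Karamata's theorem applied to the tail \eqref{svfunction} gives the truncated-moment estimate
$$\expec\!\bigl[D\,\1_{\{D\le x\}}\bigr]\sim \frac{\tau-1}{2-\tau}\,\ell(x)\,x^{2-\tau}\qquad\text{as } x\to\infty,$$
whence $\expec[T_n] = n\,\expec\!\bigl[D\,\1_{\{D\le \varepsilon_n u_n\}}\bigr] = O\bigl(n\,\ell(\varepsilon_n u_n)(\varepsilon_n u_n)^{2-\tau}\bigr)$. Plugging in \eqref{u_n} and using $n\,u_n^{1-\tau} = \ell(n)^{1-\tau}$, this simplifies to $O\bigl(\varepsilon_n^{2-\tau}\,\tilde\ell(n)\,u_n\bigr)$ for some slowly varying $\tilde\ell$, which is $o(u_n)$ provided $\varepsilon_n$ vanishes fast enough to absorb $\tilde\ell$. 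On the other hand, classical stable-law convergence for heavy-tailed i.i.d.\ sums with tail index $\tau-1\in(0,1)$ yields $L_n/u_n \convd S$ for a positive $(\tau-1)$-stable random variable $S$, so $L_n\ge c\,u_n$ w.h.p.\ for some $c>0$. Combining the two bounds via Markov's inequality gives $T_n/L_n \convp 0$.

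\textbf{Conclusion.} Let $v$ be uniformly chosen and fix $\eta>0$. Choose $M\in\mathbb{N}$ with $\P(D\le M) > 1-\eta$. Conditional on the degree sequence and on $D_v = d\le M$, the configuration-model construction pairs each of $v$'s $d$ half-edges with a uniformly chosen unpaired half-edge, so any given half-edge of $v$ lands on a non-giant with conditional probability at most $T_n/(L_n-M)$. A union bound over the $d\le M$ half-edges of $v$ then gives
$$\P(v \text{ has a non-giant neighbor}) \le \eta + M\,\expec\!\left[\frac{T_n}{L_n-M}\right] + o(1),$$
which tends to $\eta$ by the auxiliary claim; letting $\eta\to 0$ finishes the proof. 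The same argument, with the neighbor set being unchanged by erasure and with $u_n$ replaced by $n^\alpha$ in the conditioned model, handles the other two versions. The main obstacle is really the auxiliary claim: the slowly varying factors must be tracked carefully to verify that the first-moment bound indeed vanishes, which forces $\varepsilon_n$ to decrease at a rate that dominates the slowly varying correction $\tilde\ell(n)$.
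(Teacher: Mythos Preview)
Your argument is correct, but for the unconditioned model it proceeds differently from the paper. The paper does not bound $\expec[T_n]$ at all; instead it uses an order-statistics argument: for any fixed $k$, the top $k$ degrees $D_{(n)},\ldots,D_{(n-k+1)}$ are all giants with high probability (they exceed $\varepsilon_n u_n$ by \cite[Lemma~2.1]{vdEvdHHZ05}), so $T_n/L_n \le 1 - \sum_{i=0}^{k-1} D_{(n-i)}/L_n$, and the subtracted ratio converges in distribution to $\sum_{i=1}^k \xi_i / \sum_{i\ge 1}\xi_i$ for summable positive $(\xi_i)$. Letting $k\to\infty$ forces $T_n/L_n\convp 0$. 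Your Karamata-plus-stable-law route is in fact closer to what the paper does for the \emph{conditioned} model, where a first/second-moment computation is used. Either approach works; yours is arguably more direct, while the paper's avoids tracking slowly varying factors by appealing to the already-established Poisson--Dirichlet-type limit.

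One point to tighten: your stated worry that ``$\varepsilon_n$ must decrease at a rate that dominates $\tilde\ell(n)$'' is unwarranted, and since the definition allows $\varepsilon_n\searrow 0$ \emph{arbitrarily slowly}, it would be a genuine gap if such a rate were needed. It is not. Tracing your computation, the slowly varying correction is essentially $\ell(\varepsilon_n u_n)/\ell(u_n)$, and Potter's theorem gives $\ell(\varepsilon_n u_n)/\ell(u_n)\le C\,\varepsilon_n^{-\delta}$ for any $\delta>0$. Choosing $\delta\in(0,2-\tau)$ yields $\varepsilon_n^{2-\tau}\,\tilde\ell(n)\le C\,\varepsilon_n^{2-\tau-\delta}\to 0$ with no rate assumption on $\varepsilon_n$. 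You should invoke Potter's bound explicitly and drop the caveat.
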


\begin{remark} \label{rem:only_to_giants} The lemma is essentially established by showing that a randomly chosen half-edge is with high probability connected to a giant-degree vertex. This observation will be used to establish Corollary \ref{thm:multiple}.
\end{remark}

\begin{proof}
The lemma is a consequence of the fact that the total degree of normal vertices is negligible compared to the total degree of the giants. This is proved in \cite[Lemma 2.2 and Lemma 3.1]{vdEvdHHZ05} for the original and the erased model, respectively, but since the definitions of giant-degree vertices differ slightly from ours we give a brief sketch here.

Let $D_{\sss (1)}\leq D_{\sss (2)}\leq \ch{\cdots \leq} D_{\sss (n)}$ denote the order statistics of the degrees, so that $D_{\sss (1)}$ is the smallest degree, $D_{\sss (2)}$ the second smallest and so on. It follows from \cite[Lemma 2.1]{vdEvdHHZ05} that $\P(D_{\sss (N-k)}\geq \vep_nu_n)\to 1$ for any $k$, and hence with high probability
	$$
	\frac{K_n}{L_n}\leq 1-\frac{\sum_{i=0}^{k-1}D_{\sss (N-i)}}{L_n}.
	$$
Also by \cite[Lemma 2.1]{vdEvdHHZ05},
	$$
	\frac{\sum_{i=0}^{k-1}D_{\sss (N-i)}}{L_n}\stackrel{d}{\to}\frac{\sum_{i=1}^k\xi_i}{\sum_{i=1}^\infty\xi_i},
	$$
where $\ch{(\xi_i)_{i\geq 1}}$ are independent almost surely finite \ch{and non-negative} random variables \ch{such that $\sum_{i=1}^\infty\xi_i<\infty$.} Since this holds for any $k$, we conclude that $K_n/L_n\to 0$ in probability so that half-edges of a randomly chosen vertex in the original model are with high probability connected to half-edges of giant-degree vertices. Since giants in the erased model are defined based on the non-erased degree, we draw the same conclusion there.



\ch{To deal with the conditioned model, with degree distribution given by \eqref{cond_degrees}, write $M_n$ for the total degree in this case. Then
	\eqn{
	\E[M_n]=\frac{n}{F(n^\alpha)}\sum_{j=1}^{n^\alpha}[\P(D>j)-\P(D>n^\alpha)].
	}
Recall \eqref{svfunction}. By Karamata's Theorem \cite[Theorems 1.7.2 and 2.6.1]{BinGolTeu89},
	\eqn{
	\sum_{j=1}^{n^\alpha}\P(D>j)=(1+o(1)) \ell(n^\alpha) (2-\tau)^{-1} n^{\alpha(2-\tau)},
	}
while $n^\alpha\P(D>n^\alpha)=\ell(n^\alpha)n^{\alpha(2-\tau)}$. Thus,
	\eqn{
	\E[M_n]=(1+o(1)) \frac{\tau-1}{2-\tau} \ell(n^\alpha)n^{1+\alpha(2-\tau)}.
	}
Further, 
	\eqan{
	{\rm Var}(M_n)&\leq n\expec[D(n)^2]\leq  n\cdot n^\alpha\E[D(n)]= n^\alpha \E[M_n]\ll \E[M_n]^2,
	}
since $\alpha<1+\alpha(2-\tau)$. In particular, $M_n/\E[M_n]\convp 1.$
Similarly, with $K_n^\alpha$ denoting the total degree of non-giants, 
	\eqn{
	\E[K_n^\alpha]\leq \frac{n}{F(n^\alpha)}\sum_{j=1}^{\vep_n n^\alpha}\P(D>j)=(1+o(1)) \ell(\vep_n n^\alpha) (2-\tau)^{-1} (\vep_n n)^{\alpha(2-\tau)}=o(\E[M_n]),
	}
since $\alpha<1/(\tau-1)$ and since $\ell(\vep_n n^\alpha)/\ell(n^\alpha)\leq c \vep_n^{-\delta}$ for any $\delta>0$ by Potter's Theorem \cite[Theorem 1.5.6]{BinGolTeu89}. Thus,$K^\alpha_n/M_n\convp 0$.}
\end{proof}

For two given vertices $u$ and $v$, write $T_i(u,v)$ for the first passage time between $u$ and $v$ in a one-type FPP process based on $\ch{(X_i(e))_e}$, that is,
	\begin{equation}\label{fp}
	T_i(u,v)=\inf\left\{\sum_{e\in\Gamma}X_i(e)\colon \Gamma \mbox{ is a path between }u\mbox{ and }v\right\}.
	\end{equation}
The following key result states that the first passage time between two giants is vanishing:

\begin{proposition}[The infection spreads quickly between giants]\label{prop:quickly}
Consider a configuration graph obtained from the original, the conditioned or the erased model and let $h_i$ be a randomly chosen neighbor of vertex $i$ ($i=1,2$). If \eqref{svfunction} and \eqref{passtime_assumption} hold, then, for any $\ep>0$,
	$$
	\lim_{n\to\infty}\P(T_i(h_1,h_2)\geq \ep)= 0, \qquad i=1,2.
	$$
\end{proposition}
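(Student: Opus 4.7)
My plan is to reduce the statement to a uniform bound on passage times between giants, and then prove that bound by exhibiting many short edge-disjoint paths between any two giants. By Lemma~\ref{only_to_giants}, the random neighbors $h_1, h_2$ of vertices $1$ and $2$ are giants with high probability, so it suffices to show $T_i(g_1,g_2) \convp 0$ uniformly over $g_1, g_2 \in \mathcal{H}_n$. The core step is to establish that, with high probability, there exist $M_n \to \infty$ edge-disjoint paths $\Pi_1,\dots,\Pi_{M_n}$ from $g_1$ to $g_2$, each of uniformly bounded length $\ell$. Given such a family,
$$
T_i(g_1,g_2) \;\leq\; \min_{j \leq M_n}\sum_{e \in \Pi_j} X_i(e),
$$
and edge-disjointness makes the $M_n$ inner sums i.i.d. Each such sum has at most $\ell$ terms drawn from a continuous distribution with $0 = \inf\mathrm{supp}$ by \eqref{passtime_assumption}, so the sum itself has $0 = \inf\mathrm{supp}$. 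The minimum of $M_n$ such i.i.d.\ variables therefore converges to $0$ in probability, yielding the proposition.

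The construction of the path family is tailored to each of the three models. For the original configuration model, and for the conditioned model in the regime $\alpha > 1/\tau$, two giants $g_1,g_2$ are joined by $\Theta(d_{g_1}d_{g_2}/L_n)$ parallel edges, a quantity that diverges with $n$; here $\ell=1$ suffices. For the erased model the multi-edges collapse, but two giants of degree of order $n^{1/(\tau-1)}$ share on the order of $n$ common normal neighbors (as noted in the discussion preceding Theorem~\ref{thm:e}), so one obtains $\Theta(n)$ edge-disjoint two-step paths and takes $\ell = 2$. For the conditioned model with $\alpha\in(1/(\tau+k),1/(\tau+k-1))$ and $k\geq 1$, the graph distance between two giants is at most $k+1$ by \cite{vdEvdHHZ05}; the plan is to strengthen that distance result into the existence of $M_n \to \infty$ edge-disjoint paths of length at most $k+1$.

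The main technical obstacle lies in this last case, where disjointness must be upgraded from a single-path existence result. I would adapt the neighborhood-exploration of \cite{vdEvdHHZ05}: grow a $k$-step neighborhood of $g_1$ and another of $g_2$ in stages, and at each generation use a first/second-moment argument to pair many half-edges of the current frontier to fresh vertices of the appropriate degree tier, reserving disjoint bundles of half-edges for each of the $M_n$ paths. Since at each generation the available pool of half-edges has size of order $n^{1+\alpha(2-\tau)}$, which dwarfs any slowly diverging $M_n$, the edges reserved for already-constructed paths contribute only a $o(1)$ correction to the matching probabilities, and the iteration terminates after at most $k+1$ steps with $M_n$ edge-disjoint paths connecting $g_1$ to $g_2$. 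This is the only place the exclusion $\alpha \neq 1/(\tau+k)$ of Theorem~\ref{thm:c} plays a role, since at the boundary value the appropriate pool sizes become too delicate to control.
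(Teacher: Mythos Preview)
Your overall strategy---reduce to giants via Lemma~\ref{only_to_giants}, then exhibit $M_n\to\infty$ edge-disjoint bounded-length paths between them so that $T_i$ is bounded by a minimum of i.i.d.\ sums whose support reaches $0$---matches the paper exactly. For the original model, the erased model, and the conditioned model with $\alpha>1/\tau$, your treatment (parallel edges with $\ell=1$; shared neighbors with $\ell=2$) is essentially identical to the paper's.

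The one genuine divergence is the conditioned model with $\alpha\in(1/(\tau+k),1/(\tau+k-1))$. You propose to re-run the layered exploration of \cite{vdEvdHHZ05} while reserving disjoint bundles of half-edges, aiming for $M_n$ disjoint paths of length at most $k+1$. The paper takes a simpler and more self-contained route: it uses the distance result of \cite{vdEvdHHZ05} as a black box, picks $m(n)=n^\delta$ auxiliary giants $x_2,\ldots,x_m$ (which exist since $|\mathcal{H}_n|\geq n^\gamma$), and for each $x_j$ concatenates a path $h_1\to x_j$ with a path $x_j\to h_2$, each of length at most $k+1$. This yields paths of length at most $2(k+1)$, not $k+1$, but that is harmless for the minimum-of-sums argument. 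Disjointness is then established by a direct union bound: any two vertices are joined by an edge with probability at most $\ell(n)n^{\alpha\tau-1}$, and there are $O(m^2 k^2)$ potential collisions, so $\P(\text{some overlap})=O(m^2 n^{\alpha\tau-1})\to 0$ for $\delta<(1-\alpha\tau)/2$. Your proposed exploration would likely work and would give shorter paths, but it is only sketched; the paper's detour through intermediate giants avoids re-opening the exploration machinery entirely and replaces it with an elementary first-moment disjointness argument.
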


\begin{remark} \label{rem:quickly} By Lemma \ref{only_to_giants}, with high probability $h_1$ and $h_2$ are indeed giants. The conclusion of Proposition \ref{prop:quickly} is in fact valid for a wide choice of giant-degree vertices, but we formulate it for neighbors of the vertices 1 and 2 since this is what we will apply it to. In establishing Corollary \ref{thm:multiple}, we will instead of $h_1$ apply it to the vertex $h_1'$ to which the edge with the smallest passage time among all edges incident to the $k_1$ initial type 1 vertices is attached. By Remark \ref{rem:only_to_giants}, the vertex $h_1'$ also has a giant degree.
\end{remark}

\begin{proof}
We prove the claim separately for the three versions of the model. By Lemma \ref{only_to_giants}, both $h_1$ and $h_2$ are with high probability giant-degree vertices, and we will throughout assume that their degrees are at least $\vep_n u_n$.

Consider first the original configuration model and let $E(h_1,h_2)$ denote the number of edges between $h_1$ and $h_2$. Since there are at least $\vep_n u_n$ half-edges attached to each one of $h_1$ and $h_2$, the variable $E(h_1,h_2)$ is stochastically larger than a binomial variable with parameters $\vep_nu_n/2$ and $\vep_n u_n/2L_n$. Indeed, when we go through the first half of the half-edges of $h_1$ and check if they are connected to a half-edge of $h_2$, the outcome is stochastically larger than a sequence of i.i.d.\ trials with the specified success probability, since there are then still at least $\vep_nu_n/2$ half-edges left to connect to at $h_2$ while the total number of available half-edges is at most $L_n$. It follows from \cite[Lemma 2.1]{vdEvdHHZ05} that $\P(L_n<u_n^{1+\delta})\to 1$ as $n\to \infty$ for any $\delta>0$. Let $Y_n$ denote a binomial random variable with parameters $\vep_nu_n/2$ and $\vep_nu_n/\ch{(2u_n^{1+\delta})}$. On the event $\{L_n<u_n^{1+\delta}\}$, we then have that $E(h_1,h_2)$ is stochastically larger than $Y_n$. Now recall Janson's inequality for a binomial random variable $Y_n$ stating that
\begin{equation}\label{Janson}
\P(|Y_n-\E[Y_n]| \geq t)\leq 2 \exp \left\{-\frac{t^2}{2(\E[Y_n]+t/3)}\right\}\quad\mbox{for any }t \geq 0;
\end{equation}
see \cite[Theorem 1]{J02}. Defining
	$$
	f(n)=\frac{\E[Y_n]}{2}=\frac{1}{8}\vep_n^2u_n^{1-\delta},
	$$
it follows from Janson's inequality with $t=f(n)$ that
	$$
	\P(Y_n \leq f(n))\leq 2\exp\{-C\vep_n^2u_n^{1-\delta}\},
	$$
where $C>0$ is a constant. By recalling \eqref{u_n} and picking $\delta$ small so that $(1-\delta)/(\tau-1)>0$, we conclude that $\P(E(h_1,h_2) \leq f(n))\to 0$ as $n\to\infty$, with $f(n)\to\infty$. The passage time between $h_1$ and $h_2$ is smaller than the minimum of the edge passage times of the direct edges between them, implying that
	$$
	\P(T_i(h_1,h_2) \geq \ep)\leq \P\left(\min_{\ch{j\in[f(n)]}}X_i(j) \geq \ep\right)+\P(E(h_1,h_2) \leq f(n)), \qquad i=1,2.
	$$
The assumption \eqref{passtime_assumption} guarantees that the first term on the right hand side goes to 0, which completes the proof for the original model.

Moving on to the erased model, we write $N(u,v)$ for the number of joint neighbors of the vertices $u$ and $v$. It is shown in \cite[Lemma 6.7]{BvdHH10b} that $N(h_1,h_2)/n\stackrel{d}{\to}Y$ as $n\to\infty$, where $h_1$ and $h_2$ are giant-degree vertices and $Y$ a proper random variable. It follows that $\P(N(h_1,h_2) \leq n^\gamma)\to 0$ for any $\gamma<1$. Giant vertices are hence connected to each other by a large number of (disjoint) two-step paths. For $i=1,2$, let $X^{\sss(2)}_i(j)\stackrel{d}{=}X_i(e)+X_i(\tilde{e})$ denote the total passage time of the $j$-th such path. Then,
	$$
	\P(T_i(h_1,h_2) \geq \ep)\leq \P\left(\min_{\ch{j\in[n^\gamma]}}X^{\sss(2)}_i(j) \geq \ep\right) + \P(N(h_1,h_2) \leq n^\gamma), \qquad i=1,2.
	$$
Here $\ch{(X^{\sss(2)}_i(j))_{j\geq 1}}$ are i.i.d.\ and inherit the property that inf$\{\mbox{Supp}(X^{\sss(2)}_i(j))\}=0$ from their summands, implying that the first term converges to 0 for any $\gamma>0$. This proves the claim.

Finally we consider the model where the degrees are conditioned to be at most $n^\alpha$. For $\alpha>1/\tau$ the graph still has the same topology in the sense that the giants constitute a tightly connected complete graph (although the number of giant-degree vertices converges to infinity when $\alpha<1/(\tau-1)$). Let $E^\alpha(h_1,h_2)$ denote the number of edges connecting the two giants $h_1$ and $h_2$. In the same way as when dealing with the original model, the number $E^\alpha(h_1,h_2)$ can be stochastically bounded from below by a binomial random variable $Y^\alpha_n$ with parameters $\vep_nn^\alpha/2$ and $\vep_nn^\alpha/\ch{(2n^{1+\alpha(2-\tau)+\delta})}$, and mean
$
\E[Y^\alpha_n]=\vep_n^2n^{\alpha\tau-1-\delta}/4.
$
Picking $\delta\in(0,\alpha\tau-1)$, the same argument as for the original model yields that $\P(T_i(h_1,h_2) \geq \ep)\to 0$.

For $\alpha<1/\tau$, we have to work slightly harder. Fix $k\in\mathbb{N}$. It has been shown in \cite[Lemma 3.3]{vdEvdHHZ05} that, when $\alpha\in(1/(\tau+k),1/(\tau+k-1))$, the graph distance between any two giant-degree vertices is with high probability at most $k+1$. We claim that in fact there is a {\em large number} of disjoint paths of length at most $2(k+1)$ between two giants. To see this, first note that the number of giant-degree vertices is given by $|\mathcal{H}_n|=\sum_{v}\mathbf{1}_{\{D_v(n)>\vep_n n^\alpha\}}$, with
	\begin{eqnarray*}
	\E[|\mathcal{H}_n|] & = & n \mathbb{P}(D(n) > \vep_n n^{\alpha}) = n \frac{F(n^{\alpha}) - F(\vep_n n^{\alpha})}{F(n^{\alpha})}\geq n(F(n^{\alpha}) - F(\vep_n n^{\alpha})) \\
	&= & \ell(n)n^{1-\alpha(\tau-1)}.
	\end{eqnarray*}
Since $\alpha<1/(\tau-1)$, we have that $1-\alpha(\tau-1)>0$, and an application of Janson's inequality \eqref{Janson} yields that $|\mathcal{H}_n|\geq n^\gamma$ with high probability for $\gamma\in(0,1-\alpha(\tau-1))$. The number of giant-degree vertices hence grows to infinity with $n$. With this observation at hand, we proceed to construct a growing number of paths between $h_1$ and $h_2$ that are with high probability disjoint.

\vspace{0.2cm}
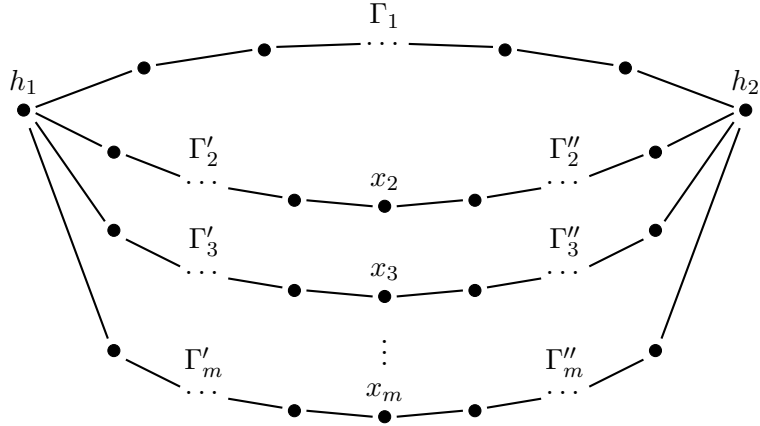
\begin{figure}[htbp]
\begin{center}
\begin{tikzpicture}[scale = 0.8]

\draw[fill] (-6,7) circle (0.1);
\draw[thick] (-5.8,7.05) -- (-4.2,7.65);
\draw[fill] (-4,7.7) circle (0.1);
\draw[thick] (-3.8,7.75) -- (-2.2,8);
\draw[fill] (-2,8) circle (0.1);
\draw[thick] (-1.8,8.05) -- (-0.4,8.1);
\node at (0,8.1) {$\dots$};
\draw[thick] (0.4,8.1) -- (1.8,8.05) ;
\draw[fill] (2,8) circle (0.1);
\draw[thick] (2.2,8) -- (3.8,7.75);
\draw[fill] (4,7.7) circle (0.1);
\draw[thick] (4.2,7.65) -- (5.8,7.05);
\draw[fill] (6,7) circle (0.1);

\draw[thick] (-5.8,6.95) -- (-4.7,6.4);
\draw[fill] (-4.5,6.3) circle (0.1);
\draw[thick] (-4.3,6.25) -- (-3.4,5.9);
\node at (-3,5.8) {$\dots$};
\draw[thick] (-2.6,5.7) -- (-1.7,5.55);
\draw[fill] (-1.5,5.5) circle (0.1);
\draw[thick] (-1.3,5.5) -- (-0.2,5.4);
\draw[fill] (0,5.4) circle (0.1);
\draw[thick] (0.2,5.4) -- (1.3,5.5);
\draw[fill] (1.5,5.5) circle (0.1);
\draw[thick] (1.7,5.55) -- (2.6,5.7);
\node at (3,5.8) {$\dots$};
\draw[thick] (3.4,5.9) -- (4.3,6.25);
\draw[fill] (4.5,6.3) circle (0.1);
\draw[thick] (4.7,6.4) -- (5.8,6.95);

\draw[thick] (-5.8,6.8) -- (-4.65,5.15);
\draw[fill] (-4.5,5) circle (0.1);
\draw[thick] (-4.3,4.85) -- (-3.4,4.4);
\node at (-3,4.3) {$\dots$};
\draw[thick] (-2.6,4.2) -- (-1.7,4.05);
\draw[fill] (-1.5,4) circle (0.1);
\draw[thick] (-1.3,4) -- (-0.2,3.9);
\draw[fill] (0,3.9) circle (0.1);
\draw[thick] (0.2,3.9) -- (1.3,4);
\draw[fill] (1.5,4) circle (0.1);
\draw[thick] (1.7,4.05) -- (2.6,4.2);
\node at (3,4.3) {$\dots$};
\draw[thick] (3.4,4.4) -- (4.3,4.85);
\draw[fill] (4.5,5) circle (0.1);
\draw[thick] (4.65,5.15) -- (5.8,6.8);

\node at (0,3.1) {$\vdots$};

\draw[thick] (-5.9,6.7) -- (-4.6,3.2);
\draw[fill] (-4.5,3) circle (0.1);
\draw[thick] (-4.3,2.85) -- (-3.4,2.4);
\node at (-3,2.3) {$\dots$};
\draw[thick] (-2.6,2.2) -- (-1.7,2.05);
\draw[fill] (-1.5,2) circle (0.1);
\draw[thick] (-1.3,2) -- (-0.2,1.9);
\draw[fill] (0,1.9) circle (0.1);
\draw[thick] (0.2,1.9) -- (1.3,2);
\draw[fill] (1.5,2) circle (0.1);
\draw[thick] (1.7,2.05) -- (2.6,2.2);
\node at (3,2.3) {$\dots$};
\draw[thick] (3.4,2.4) -- (4.3,2.85);
\draw[fill] (4.5,3) circle (0.1);
\draw[thick] (4.6,3.2) -- (5.9,6.7);

\node [above] at (-6,7.1) {$h_1$};
\node [above] at (6,7.1) {$h_2$};
\node [above] at (0,5.5) {$x_2$};
\node [above] at (0,4) {$x_3$};
\node [above] at (0,2) {$x_m$};
\node [above] at (0,8.2) {$\Gamma_1$};
\node [above] at (-3,5.9) {$\Gamma_2'$};
\node [above] at (3,5.9) {$\Gamma_2''$};
\node [above] at (-3,4.4) {$\Gamma_3'$};
\node [above] at (3,4.4) {$\Gamma_3''$};
\node [above] at (-3,2.4) {$\Gamma_m'$};
\node [above] at (3,2.4) {$\Gamma_m''$};

\end{tikzpicture}
\end{center}
\caption{{\small Construction of the $m$ disjoint paths of length at most $2(k+1)$ from $h_1$ to $h_2$, where $m=m(n)$ grows to infinity with $n$.}}
\label{fig:paths}
\end{figure}


Let $\Gamma_1$ be a path from $h_1$ to $h_2$ of length at most $k+1$. Pick a giant vertex $x_2 \notin \Gamma_1$ -- this is possible with high probability since the number of giant-degree vertices grows to infinity with $n$. Then there exist paths $\Gamma_2'$ and $\Gamma_2''$ of length at most $k+1$ connecting $x_2$ to $h_1$ and $h_2$, respectively. Let $\Gamma_2 = \Gamma_2' \cup \Gamma_2''$, where loops arising from common vertices in $\Gamma_2'$ and $\Gamma_2''$ are removed. Then $\Gamma_2$ constitutes a path between $h_1$ and $h_2$ of length at most $2(k+1)$. We claim that, with high probability as $n \to \infty$, the paths $\Gamma_1$ and $\Gamma_2$ are disjoint, except for the first and last vertices $h_1$ and $h_2$. Let $B_n=\{M_n>\ell(n)n^{1+\alpha(2-\tau)}\}$ and recall from the proof of Lemma~\ref{only_to_giants} that $\P(B_n)\to 1$. Conditionally on all degrees, the probability that there is an edge between two vertices $u$ and $v$ is at most $D_u(n)D_{v}(n)/M_n$. Using the fact that the degrees are at most $n^\alpha$, and letting $u \leftrightarrow v$ denote the event that there is an edge between $u$ and $v$, we obtain that
	$$
	\mathbb{P}(\{u \leftrightarrow v\}\cap B_n) \leq \ell(n)\frac{n^{2\alpha}}{n^{1+\alpha(2-\tau)}} = \ell (n)n^{\alpha \tau -1}.
	$$
Note that, if $\Gamma_1$ and $\Gamma_2$ are not disjoint, then there must exist a vertex in $\Gamma_1$ that is connected to a vertex in $\Gamma_2$. With $A_2$ denoting the event that $\Gamma_1$ and $\Gamma_2$ are disjoint, we hence have that
	\begin{equation}
	\mathbb{P}(A_2^c\cap B_n) \leq 2(k+1)^2 \ell(n)n^{\alpha \tau -1}.
	\end{equation}
Next, pick a giant vertex $x_3 \notin \Gamma_1 \cup \Gamma_2$ connected to $h_1$ and $h_2$ by paths $\Gamma_3'$ and $\Gamma_3''$, respectively, of length at most $\ch{k+1}$. Let $\Gamma_3 = \Gamma_3' \cup \Gamma_3''$, again removing any loops. Then $\Gamma_3$ is a path from $h_1$ to $h_2$ of length at most $2(k+1)$. Denote by $A_3$ the event that $\Gamma_3$ and $\Gamma_1\cup \Gamma_2$ are disjoint. Since $|\Gamma_1\cup \Gamma_2|\leq 3(k+1)$, we obtain as above that
	$$
	\mathbb{P}(A_3^c\cap B_n) = \mathbb{P}(\{\Gamma_3 \cap (\Gamma_1 \cup \Gamma_2) \neq \emptyset\}\cap B_n) \leq 6(k+1)^2 \ell(n)n^{\alpha \tau -1}.
	$$
Iterating this construction, in step $m$ we pick a giant vertex $x_m$ connected to $h_1$ and $h_2$ by paths $\Gamma_m'$ and $\Gamma_m''$, respectively, of length at most $k+1$, and set $\Gamma_m=\Gamma_m'\cup \Gamma_m''$ with loops removed. Define
	$$
	A_m=\left\{\Gamma_m \mbox{ and }\Gamma_1\cup\cdots\cup\Gamma_{m-1} \mbox{ are disjoint} \right\}.
	$$
Then, since $|\Gamma_m|\leq 2(k+1)$ and $|\Gamma_1\cup\ldots\cup\Gamma_{m-1}|\leq (2m-3)(k+1)$, we can bound
	$$
	\mathbb{P}(A_m^c\cap B_n) \leq 2(2m-3)(k+1)^2\ell(n) n^{\alpha \tau -1}.
	$$
\ch{Let} $\bar{A}_m=\cap_{j=2}^mA_j$ denote the event that there is no overlap between any of the paths that we have constructed. Then
	\begin{eqnarray*}
	\P(\bar{A}_m\cap B_n) & = & \P\bigg(\bigcup_{j=2}^m A_j^c\cap B_n\bigg) \leq  \sum_{j=2}^m \mathbb{P}(A_j^c\cap B_n) \\
	& \leq & \, 2(k+1)^2 \ell(n)n^{\alpha \tau -1}\sum_{j=2}^m (2j-3) \\
	&\leq & \, 2(k+1)^2 m^2\ell(n)n^{\alpha \tau -1}.
	\end{eqnarray*}
Since $\alpha<1/\tau$, we have that $\alpha\tau-1>0$. Pick $\delta\in(0,\alpha\tau-1)$ and take $m=m(n)=n^\delta$ so that $\P(\bar{A}_{m(n)})\to 0$.

\ch{To complete the proof, for} $i=1,2$, let $X_i^{\sss 2(k+1)}$ be a random variable with $X_i^{\sss 2(k+1)}\stackrel{d}{=}X_i(1)+\ldots +X_i(2(k+1))$, that is, $X_i^{\sss 2(k+1)}$ is distributed as the passage time of a given path of length $2(k+1)$. On the event $\bar{A}_m$ that the $m$ paths that we have constructed between $h_1$ and $h_2$ are disjoint, the passage time $T_i(h_1,h_2)$ between $h_1$ and $h_2$ is bounded from above by the minimum of $m$ i.i.d.\ copies of $X_i^{\sss 2(k+1)}$. Under the assumption \eqref{passtime_assumption}, such a minimum converges to 0 in probability, since $m(n)=n^\delta\to\infty$. Let $\{X_i^{\sss(2(k+1))}(j)\}_{j\geq 1}$ be the i.i.d.\ sequence. The proof is completed by noting that
	$$
	\P(T_i(h_1,h_2) \geq \ep)\leq \P\left(\min_{\ch{j\in[ m(n)]}}X_i^{\sss(2(k+1))}(j) \geq \ep\right)+\P(\bar{A}_m\cap B_n)+\P(B_n), \qquad i=1,2,
	$$
where all terms have been shown above to converge to 0.
\end{proof}

\begin{remark}[Avoiding the initial vertices]
\label{rem}
Note from the proof that Proposition \ref{prop:quickly} remains true also when vertex 1 and 2 (or any finite set of vertices) are not allowed to be used to transfer the infection from $h_1$ to $h_2$, that is, when the infimum in \eqref{fp} is taken over all paths $\Gamma$ that do not contain vertex 1 or 2. This will be relevant when applying the result to the competition model below.
\end{remark}

With Lemma~\ref{only_to_giants} and Proposition~\ref{prop:quickly} in hand, we are ready to prove Theorems~\ref{thm:o}, \ref{thm:c} and \ref{thm:e}. Given that Lemma~\ref{only_to_giants} and Proposition~\ref{prop:quickly} apply for all versions of the model, the proofs are identical and can be merged.

\begin{proof}[Proof of Theorems~\ref{thm:o}, \ref{thm:c}, \ref{thm:e}]
We show that, for any $\delta>0$, there exists $n_\delta$ such that
\begin{equation}\label{delta}
\P(\ch{N_2(n)>1}\mid Z_1<Z_2)\leq \delta \quad\mbox{when }n\geq n_\delta,
\end{equation}
that is, $\P(N_2(n)=1\mid Z_1<Z_2)\to 1$. That $\P(N_1(n)=1\mid Z_1>Z_2)\to 1$ is proved analogously.

Let $\cN_i$ denote the set of neighbors of vertex $i$ and write $h^*$ for the first vertex in $\cN_1\cup \cN_2$ that is infected. By definition, if $Z_1<Z_2$, then $h^*=h_1\in\cN_1$. Write $\widetilde{T}_i(h_1,h)$ for the type $i$ first passage time between $\ch{h_1}$ and $h$ when vertex 1 and 2 are not allowed to be used and note that, for any $\ep>0$, conditionally on $Z_1<Z_2$,
$$
\{N_2(n)>1\}\subset\left\{\max_{h\in\cN_2}\widetilde{T}_1(h_1,h)\geq \ep\right\}\cup\{Z_2-Z_1\leq\ep\}.
$$
Indeed, if all vertices in $\cN_2$ are reached by type 1 from $h_1$ within time $\ep$, while the time that it takes for type 2 to reach any vertex in $\cN_2$ is larger than $\ep$, then type 2 \ch{is not} able to make any progress at all, and \ch{thus ends} up occupying only its initial site.

Now pick $\ep$ small so that $\P(Z_2-Z_1\leq \ep\mid Z_1<Z_2)<\delta/2$. This is possible since $Z_2-Z_1$ is a proper random variable with support on $(0,\infty)$. Also fix $d$ such that $\P(|\cN_2|>d\mid Z_1<Z_2)<\delta/4$ and observe that
	$$
	\P\left(\max_{h\in\cN_2}\widetilde{T}_1(h_1,h)\geq \ep\mid Z_1<Z_2\right)\leq \P\left(\max_{h\in\cN_2\colon |\cN_2|\leq d}\widetilde{T}_1(h_1,h)\geq \ep\mid Z_1<Z_2\right)+\delta/4.
	$$
The event $Z_1<Z_2$ only contains information about vertex 1 and 2 and hence does not affect $\widetilde{T}_1(h_1,h)$. Combining this with a union bound, we obtain that
	$$
	\P\left(\max_{h\in\cN_2\colon |\cN_2|\leq d}\widetilde{T}_1(h_1,h)\geq \ep\cap G_n \mid Z_1<Z_2\right)\leq d\cdot\P(\ch{\widetilde{T}_1(h_1,h_2)\geq \ep}),
	$$
where $h_1$ and $h_2$ are randomly chosen neighbors of vertex 1 and 2, respectively. Here, the right hand side converges to 0 by Proposition~\ref{prop:quickly} and Remark \ref{rem} and so can be made smaller than $\delta/4$ by picking $n$ large. This concludes the proof of \eqref{delta}.
\end{proof}

Finally, we indicate how the above proof can be generalized to establish Corollary~\ref{thm:multiple} and its counterpart for the erased and conditioned model:

\begin{proof}[Proof of Corollary~\ref{thm:multiple}]
Generalizing \eqref{delta}, we now need to show that, for any $\delta>0$, there exists $n_\delta$ such that
	\begin{equation}\label{deltageneralized}
	\P(\ch{N_2(n)>k_2}\mid Z_{1,k_1}<Z_{2,k_2})\leq \delta \quad\mbox{when }n\geq n_\delta.
	\end{equation}
To this end, consider all edges attached to the $k_1$ initial type 1 vertices and let $h_1'$ be the other end point of the edge with the smallest passage time -- this is the vertex that is infected when type 1 makes its first move. Note that \eqref{deltageneralized} can be proved in a similar way as above by showing that, for any $\ep>0$, conditionally on $Z_{1,k_1}<Z_{2,k_2}$,
$$
\{N_2(n)>k_2\}\subset\left\{\max_{h\in\cN_2}\widetilde{T}_1(h_1',h)\geq \ep\right\}\cup\{Z_{2,k_2}-Z_{1,k_1}\leq\ep\},
$$
where both events on the right hand side have probability smaller than $\delta/2$. Here, to bound the first term, we use Remark \ref{rem:quickly}. We conclude that $\P(N_2(n)=k_2 \mid Z_{1,k_1}<Z_{2,k_2})\to 1$. If $k_2$ is fixed while $k_1=k_1(n)\to\infty$, then $\P(Z_{1,k_1}<Z_{2,k_2})\to 1$, implying that $\P(N_2(n)=k_2)\to 1$.
\end{proof}

\section{Further work}
\label{further}

There are a number of aspects of the model treated here that \ch{deserve} further attention. We give a few examples below.\medskip

\noindent\textbf{Growing initial sets.} When {\em both} initial sets grow with $n$, we believe that the outcome of the competition depends on the combination of growth rates of the initial sets and the passage time distribution. For a growing initial set, the time until the corresponding infection type reaches a giant vertex should converge to 0 in probability, with the rate of convergence being determined by the growth of the set, \ch{as well as the behavior close to zero of the passage-time distribution}. \ch{Also} the rate at which the first passage time between two giants converges to 0 depends on the passage-time distribution.
\medskip

\noindent\textbf{Other passage-time distributions.} What happens for continuous passage-time distributions that do not fulfil our assumption \eqref{passtime_assumption}? We believe that the outcome of the competition may then depend not only on the infimum of the supports, but also on other properties of the distributions. In general, if the passage-time distribution does not have support down to 0, then the first passage time between two giants for the corresponding type is not vanishing, implying that the other type is not cut off from the possibility of capturing vertices beyond its initial set. However, a type that does not have support down to 0 may still have the possibility of occupying {\em all} initially uninfected vertices: Suppose that type 1 has a passage time with support down to 0, while type 2 has a passage time with support down to $\eta>0$. Suppose also that the support of \ch{the} passage time of type 1 is larger than $2\eta$. Then, with positive probability, $Z_1>2(\eta+\ep)$ for some $\ep>0$, while $Z_2\leq \eta+\ep$. \ch{As a result}, from the giant neighbor that type 2 reaches at time $Z_2$, \ch{with high probability conditionally on $Z_1>2(\eta+\ep)$ and $Z_2\leq 2(\eta+\ep)$,} {\em all} other giants \ch{are} reached at time $2(\eta+\ep)$ \ch{by type 2}, since the giant is connected to all other giants with an increasing number of edges so the passage time \ch{of type 2 is} close to $\eta$. Since vertex 1 is only connected to giants, at time $2(\eta+\ep)$ \ch{it becomes} isolated, even though the support of its passage times \ch{does go} down to zero, while the one for type 2 does not. \ch{We conclude that type 2 wins with positive probability, but naturally also type 1 wins with positive probability. Working out what the exact winning probabilities are for each of the two types in cases where the supports do not go down to zero is quite interesting.}

Another option that might seem natural is to consider {\em discrete} passage times. In order to be meaningful, however, this would presumably require some type of non-lattice condition guaranteeing that both types cannot arrive at a vertex at the same time. If both types arrive simultaneously at a vertex, then a \ch{tie-breaking rule} is needed to decide the type that occupies the vertex and due to the special structure of the graph for $\tau\in(1,2)$ it is likely that the choice of tie-breaker will in fact decide the competition. In less heavy-tailed regimes, the tie-breaker \ch{typically kicks} in only when the competition is already decided and \ch{is} thereby essentially irrelevant; see e.g.\ \cite{BvdHK15,vdHK15}.\medskip

\noindent\textbf{Conditioned model with stricter upper bound.} In the conditioned model, the degrees are conditioned to be smaller than $n^\alpha$ for some $\alpha>0$. The upper bound could also be taken as a more general function $a_n$ of $n$. When $a_n$ grows more slowly with $n$, the conditioning \ch{has} a larger impact on the structure of the graph. Specifically, when $a_n$ is sufficiently small, the graph may loose the property that any two vertices of maximal degree are within finite distance from each other. This would be interesting to study in its own right, but would also have implications for the competition process in that the first passage time between two giants is then no longer vanishing. We believe that, for a certain class of \ch{natural passage-time distributions including the exponential and uniform distributions}, the cut-off may be at $a_n\propto \log n$. \ch{For other distributions having a thicker or thinner tail close to 0, we believe the cut-off to be at a different value of $a_n$. We} leave this for future research. 

\end{document}